\documentclass[12pt]{amsart} 

\usepackage[OT2, T1]{fontenc}

\usepackage{amscd}
\usepackage{amsmath}
\usepackage{amssymb}
\usepackage{mathrsfs} 			
\usepackage{units}
\usepackage[all]{xy}

\usepackage{algorithm}
\usepackage{algorithmic}

%
%
%

%

\def\frk{\frak}               

\def\mm{{\frk m}}

\def\Phi{{\frk n}}
\def\Phi{{\frk N}}
%
%

%
\def\opn#1#2{\def#1{\operatorname{#2}}} 

\opn\projdim{proj\,dim} \opn\injdim{inj\,dim} \opn\rank{rank}
\opn\depth{depth} \opn\sdepth{sdepth} \opn\fdepth{fdepth}
\opn\grade{grade} \opn\height{height} \opn\embdim{emb\,dim}
\opn\codim{codim}  \opn\min{min} \opn\max{max}

\opn\Tr{Tr} \opn\bigrank{big\,rank}
\opn\superheight{superheight}\opn\lcm{lcm}
\opn\trdeg{tr\,deg}
\opn\reg{reg} \opn\lreg{lreg} \opn\ini{in} \opn\lpd{lpd}
\opn\size{size}
%
\opn\div{div} \opn\Div{Div} \opn\cl{cl} \opn\Cl{Cl}
%
%
\opn\Spec{Spec} \opn\Supp{Supp} \opn\supp{supp} \opn\Sing{Sing}
\opn\Ass{Ass} \opn\Min{Min}
%
%
\opn\Ann{Ann} \opn\Rad{Rad} \opn\Soc{Soc}
%
%
\opn\Im{Im} \opn\Ker{Ker} \opn\Coker{Coker} \opn\Am{Am}
\opn\Hom{Hom} \opn\Tor{Tor} \opn\Ext{Ext} \opn\End{End}
\opn\Aut{Aut} \opn\id{id}  \opn\deg{deg}

\opn\nat{nat}
\opn\pff{pf}
\opn\Pf{Pf} \opn\GL{GL} \opn\SL{SL} \opn\mod{mod} \opn\ord{ord}
\opn\Gin{Gin} \opn\Hilb{Hilb}
%
%
\opn\aff{aff} \opn\con{conv} \opn\relint{relint} \opn\st{st}
\opn\lk{lk} \opn\cn{cn} \opn\core{core} \opn\vol{vol}
\opn\link{link} \opn\star{star}
\opn\gr{gr}

%
%

\def\pot#1#2{#1[\kern-0.28ex[#2]\kern-0.28ex]}

%
%
\opn\dirlim{\underrightarrow{\lim}}
\opn\inivlim{\underleftarrow{\lim}}
%
%
%

%
%

\def\Implies{\ifmmode\Longrightarrow \else
        \unskip${}\Longrightarrow{}$\ignorespaces\fi}
\def\implies{\ifmmode\Rightarrow \else
        \unskip${}\Rightarrow{}$\ignorespaces\fi}
\def\iff{\ifmmode\Longleftrightarrow \else
        \unskip${}\Longleftrightarrow{}$\ignorespaces\fi}

\let\:=\colon
\newtheorem{Theorem}{Theorem}[]
\newtheorem{Lemma}[Theorem]{Lemma}

\newtheorem{Proposition}[Theorem]{Proposition}

\theoremstyle{definition}

\newtheorem{Remark}[Theorem]{Remark}

\newtheoremstyle{subsection-tweak}
   {11pt}
   {3pt}%
   {}
   {}%
   {\bfseries}
   {}%
   {.5em}
   {\thmnumber{\@{#1}{}\@{#2}.}%
    \thmnote{~{\bfseries#3.}}}    

\newcounter{numberingbase}

\theoremstyle{subsection-tweak}
\newtheorem{bpp}[Theorem]{}
\newtheorem{bppt}[numberingbase]{}
\newcommand{\bbpp}{\begin{bpp}}
\newcommand{\eepp}{\end{bpp}}
\newcommand{\bbppt}{\begin{bppt}}
\newcommand{\eeppt}{\end{bppt}}

\theoremstyle{theorem}

\theoremstyle{definition}


\newcommand{\val}{\mathrm{val}}		

\providecommand{\qxq}[1]{\quad\text{#1}\quad}

\newcommand{\tst}{\textstyle}

\DeclareMathOperator{\Frac}{Frac}		

%
\let\epsilon\varepsilon
\let\phi=\varphi
%
%
\textwidth=15cm \textheight=22cm \topmargin=0.5cm
\oddsidemargin=0.5cm \evensidemargin=0.5cm \pagestyle{plain}
%
%
\def\qed{\ifhmode\textqed\fi
      \ifmmode\ifinner\quad\qedsymbol\else\dispqed\fi\fi}
\def\textqed{\unskip\nobreak\penalty50
       \hskip2em\hbox{}\nobreak\hfil\qedsymbol
       \parfillskip=0pt \finalhyphendemerits=0}
\def\dispqed{\rlap{\qquad\qedsymbol}}

%
\opn\dis{dis}
\def\pnt{{\raise0.5mm\hbox{\large\bf.}}}

\opn\Lex{Lex}



\begin{document}

\title{Algebraic valuation ring extensions as limits of complete intersection algebras}

\author{ Dorin Popescu}

\address{Simion Stoilow Institute of Mathematics of the Romanian Academy,
Research unit 5, P.O. Box 1-764, Bucharest 014700, Romania,}

\address{University of Bucharest, Faculty of Mathematics and Computer Science
Str. Academiei 14, Bucharest 1, RO-010014, Romania,}

\address{ Email: {\sf dorin.m.popescu@gmail.com}}

\begin{abstract} We show that an algebraic immediate valuation ring extension   of characteristic $p>0$ is a filtered union of   complete intersection algebras of finite type. 

 {\it Key words }: immediate extensions, pseudo convergent sequences, pseudo limits,  smooth morphisms, complete intersection algebras.   \\
 {\it 2020 Mathematics Subject Classification: Primary 13F30, Secondary 13A18,13F20,13B40.}
\end{abstract}

\maketitle

\section*{Introduction}

  B. Antieau and  R. Datta have recently proven a positive characteristic analogue  \cite[Theorem 4.1.1]{AD}  of Zariski's theorem \cite{Z}. It says that every perfect valuation ring of characteristic $p>0$ is a filtered union of its smooth ${\bf F}_p$-subalgebras. This result is an application of \cite[Theorem 1.2.5]{T} which relies on some results from \cite{J}. Also E. Elmanto and M. Hoyois proved that an absolute integrally closed valuation ring of residue field of characteristic $p>0$ is a filtered union of its regular finitely generated  ${\bf Z}$-subalgebras (see \cite[Corollary 4.2.4]{AD}). We remind that a  filtered direct limit (in other words a filtered colimit) is a limit indexed by a small category that is filtered (see \cite[002V]{SP} or \cite[04AX]{SP}). A filtered  union is a filtered direct limit in which all objects are subobjects of the final colimit, so that in particular all the transition arrows are monomorphisms.
 
  It is well known that if the fraction field extension of an immediate extension $V\subset V'$ is finite and $p>0$ then $V'$ may fail to be  a filtered direct limit of  smooth $V$-algebras as shows  \cite[Example 3.13]{Po1} inspired from \cite{O} (see also \cite[Remark 6.10]{Po1}). An inclusion $V \subset V'$ of valuation rings is an \emph{immediate extension} if it is local as a map of local rings and induces isomorphisms between the value groups and the residue fields of $V$ and $V'$.  
 After seeing \cite[Theorem 6.2]{KT} (see also \cite{T1}, \cite{T2})  we understood that in general we should expect that $V'$ is a  filtered  union of its complete intersection $V$-subalgebras. In the Noetherian case a morphism of rings is a filtered direct limit of smooth algebras iff it is a regular morphism (see \cite{Po0}, \cite{S}).

A {\em complete intersection} $V$-algebra essentially of finite type is a local $V$-algebra of type $C/(P)$, where $C$ is a localization of a polynomial $V$-algebra of finite type and $P$ is a regular sequence of elements of $C$.  Theorem \ref{T} stated below says that $V'$ is a  filtered
 union of its $V$-subalgebras of type $C/(P)$. Since $V'$ is local it is enough to say that $V'$ is  a  filtered
 union of its $V$-subalgebras of type $T_h/(P)$,  $T$ being a polynomial $V$-algebra of finite type, $0\not =h\in T$ and $P$ is a regular sequence of elements of $T$. Clearly, $ T_h$ is a smooth $V$-algebra and in fact it is enough to say that $V'$ is  a  filtered
 union of its $V$-subalgebras of type $G/(P)$, where $G$  is a smooth $V$-algebra of finite type and $P$ is a regular sequence of elements of $G$. Conversely, a $V$-algebra of such type $G/(P)$ has the form $T_h/(P)$ for some $T,h,P$ using \cite[Theorem 2.5]{S}. By abuse we understand by a {\em complete intersection } $V$-algebra of finite type a $V$-algebra of such type $G/(P)$, or $T_h/(P)$ which are not assumed to be flat over $V$.

The goal
of this paper is to establish the following theorem.

\begin{Theorem}\label{T} Let  $ V'$ be an  immediate extension  of a valuation ring $V$  and  $K\subset K'$ the fraction field extension. If $K'/K$ is algebraic  then $V'$ is a filtered
 union of its complete intersection $V$-subalgebras of finite type.
\end{Theorem}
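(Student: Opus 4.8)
The plan is to verify the two conditions characterizing a filtered union: that every element of $V'$ lies in some finite-type complete intersection $V$-subalgebra of $V'$, and that any two such subalgebras are contained in a common one. Both follow once I establish that \emph{every finite subset} $b_1,\dots,b_n\in V'$ is contained in a complete intersection $V$-subalgebra of $V'$ of finite type. First I would reduce to a finite subextension: set $L=K(b_1,\dots,b_n)$ and $W=V'\cap L$. Since $V\subset V'$ is immediate, the value group and residue field of $W$ coincide with those of $V$, so $V\subset W$ is again an immediate extension, now of a valuation ring of the finite algebraic extension $L/K$. It therefore suffices to produce, inside $W$, a finite-type complete intersection $V$-subalgebra containing $b_1,\dots,b_n$.

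Next I would set up the presentation. Choosing a tower $K\subset K(\theta_1)\subset\cdots\subset K(\theta_1,\dots,\theta_m)=L$ with each $\theta_j\in W$ and large enough to express the $b_i$, each step is governed by the minimal polynomial $f_j$ of $\theta_j$ over $K(\theta_1,\dots,\theta_{j-1})$. Because $V$ is a valuation ring I can clear denominators coefficient by coefficient and rescale each $f_j$ to a \emph{primitive} polynomial $g_j\in V[X_1,\dots,X_j]$ with $g_j(\theta_1,\dots,\theta_j)=0$; Gauss's lemma for valuation rings then identifies the contracted relation ideal, showing that $V[\theta_1,\dots,\theta_m]\cong V[X_1,\dots,X_m]/(g_1,\dots,g_m)$ and that $(g_1,\dots,g_m)$ is a regular sequence in the smooth (indeed polynomial) $V$-algebra $V[X_1,\dots,X_m]$. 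Injectivity into $V'$ is automatic because the generic fibre is the field $L=\Frac W$. This already exhibits a finite-type complete intersection, provided the $b_i$ lie in $V[\theta_1,\dots,\theta_m]$.

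The main obstacle is precisely the last proviso together with the \emph{defect} case, and this is where the theory of pseudo-convergent sequences and the hypothesis $\operatorname{char}=p>0$ enter. When $K(\theta_j)/K(\theta_1,\dots,\theta_{j-1})$ is wildly ramified with defect, the derivative $f_j'(\theta_j)$ is a nonunit, the special fibre of the naive model is non-reduced or has the wrong dimension, and no \emph{smooth} model exists (this is the Ohm--Popescu phenomenon of \cite{O}, \cite{Po1}). To handle this I would represent $\theta_j$ as a pseudo-limit of a pseudo-convergent sequence $(\theta_{j,\rho})$ in $K(\theta_1,\dots,\theta_{j-1})$ and, using that the extension is immediate, normalise the approximation errors $\theta_j-\theta_{j,\rho}$ by base elements of equal value. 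The fine valuation estimates supplied by the pseudo-convergent sequence let me replace the crude generators by pseudo-limit-adapted ones and, in characteristic $p$, model the inseparable or wild part of the extension by Frobenius-type equations $X^p-a$: such an equation is a nonzerodivisor, hence a length-one regular sequence, and $V[X]/(X^p-a)$ is a complete intersection although it fails to be smooth because its derivative vanishes. Assembling these data into a single smooth ambient algebra $G$ of finite type and a regular sequence $P$ with $G/(P)\hookrightarrow V'$, and checking that the $b_i$ become $V$-polynomial expressions in the chosen generators, would complete the argument; the delicate point throughout is the simultaneous verification that $G$ is smooth over $V$ and that $P$ remains a regular sequence in the defect case, which rests entirely on the valuation bounds coming from the pseudo-convergent sequences.
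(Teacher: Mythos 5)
Your overall skeleton---reduce to a finite subextension, climb a tower of simple extensions, and use pseudo-convergent sequences with primitive polynomials to produce complete intersection presentations---matches the paper's strategy. But the heart of the proof is missing: you never explain \emph{why every element of $V'$ actually lands in one of your finite-type complete intersection subalgebras}. In the paper this is the content of Lemmas \ref{ka} and \ref{kap}: one writes $x$ as a pseudo limit of $(v_i)_{i<\lambda}$, forms the normalized elements $x_i=(x-v_i)/(v_{i+1}-v_i)$, and proves via the Taylor expansion $g(x)=\sum_n (D^{(n)}g)(v_i)(x-v_i)^n$ and an Ostrowski-type value comparison that for every $g$ of degree less than $\deg h$ one has $g(x)\in g(v_i)\,(1+\mm'\cap V[x_i])$, hence every element $g(x)/t$ of $V'\cap K(x)$ lies in $V[x_i]_{\mm'\cap V[x_i]}\cong (V[X_i]/(h_i))_{\mm'\cap V[x_i]}$ for some $i$. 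That argument depends crucially on $h$ having \emph{minimal degree} among polynomials $f$ with $\val(f(v_i))<\val(f(v_j))$ for large $i<j$, and arranging this minimality is itself nontrivial: the paper needs Lemma \ref{min} to pass to an intermediate field $K_1$ and then to $K_1(h(x))$, where the minimal polynomial of $x$ becomes a unit multiple of $h(Y)-h(x)$, before Lemma \ref{kap}(2) applies; the general simple extension is then handled by induction on the degree together with transitivity of complete intersections (Lemma \ref{com}). Your proposal replaces all of this with the phrase ``the fine valuation estimates \ldots\ let me replace the crude generators by pseudo-limit-adapted ones,'' which is precisely the step that has to be proved.

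Two further points. First, the theorem as stated carries no characteristic hypothesis and the paper's argument is characteristic-free; your plan to ``model the inseparable or wild part by Frobenius-type equations $X^p-a$'' is not what is needed and is not developed far enough to check---the defect case is handled not by changing the shape of the equations but by the minimality-of-degree argument above, which works uniformly. Second, your ``naive model'' $V[\theta_1,\dots,\theta_m]\cong V[X_1,\dots,X_m]/(g_1,\dots,g_m)$ is essentially never equal to $W=V'\cap L$ when there is defect: one needs the whole filtered family $V[x_i]$ indexed by the pseudo-convergent sequence, and the union over $i$, so identifying a single presentation cannot suffice. The reduction of the filtered-union property to ``every finite subset lies in some finite-type complete intersection subalgebra'' is fine, but as it stands the proposal does not establish that statement.
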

The proof relies on Proposition \ref{pr} which uses hard results from \cite{P1}. Lemma \ref{1} is the first step in the proof of Proposition \ref{pr}.

We owe thanks to Arnab Kundu who hinted us a gap in the proof of a former version of this paper.

\vskip 0.5 cm

\section{Algebraic immediate extensions of valuation rings}

An inclusion $V \subset V'$ of valuation rings is an \emph{immediate extension} if it is local as a map of local rings and induces isomorphisms between the value groups and the residue fields of $V$ and $V'$.

Let $\lambda$ be a fixed limit ordinal  and $v=\{v_i \}_{i < \lambda}$ a sequence of elements in $V$ indexed by the ordinals $i$ less than  $\lambda$. Then $v$ is \emph{pseudo convergent} if 

$\val(v_{i} - v_{i''} ) < \val(v_{i'} - v_{i''} )     \ \ \mbox{for} \ \ i < i' < i'' < \lambda$
(see \cite{Kap}, \cite{Sch}).
A  \emph{pseudo limit} of $v$  is an element $w \in V$ with 

$ \val(w - v_{i}) < \val(w - v_{i'}) \ \ \mbox{(that is,} \ \ \val(w -  v_{i}) = \val(v_{i} - v_{i'})) \ \ \mbox{for} \ \ i < i' < \lambda$. We say that $v$  is 
\begin{enumerate}
\item
\emph{algebraic} if some $f \in V[T]$ satisfies $\val(f(v_{i})) < \val(f(v_{i'}))$ for large enough $ i < i' < \lambda$;

\item
\emph{transcendental} if each $f \in V[T]$ satisfies $\val(f(v_{i})) = \val(f(v_{i'}))$ for large enough $i < i' < \lambda$.
\end{enumerate}

 The following  lemma is a variant  of  Ostrowski (\cite[ page 371, IV and III]{O}, see also \cite[(II,4), Lemma 8]{Sch}).

\begin{Lemma}(Ostrowski) \label{o1} Let $\beta_1,\ldots,\beta_m$ be any elements of an ordered abelian group $G$, $\lambda$ a limit ordinal and
 let $\{\gamma_s\}_{s<\lambda}$ be a well-ordered, monotone increasing set of elements of G, without
a last element. Let $ t_1,\ldots, t_m$, be distinct integers. Then there  exists
an ordinal $\nu<\lambda$ such that $\beta_i+t_i\gamma_s$ are different for all $s>\nu$. In particular,  
there exists an integer $1\leq r\leq m$ such that
$$\beta_i+t_i\gamma_s>\beta_r+t_r\gamma_s$$ 
for all $i\not = r$ and $s>\nu$.
\end{Lemma}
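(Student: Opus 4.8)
The plan is to track, for each ordered pair of distinct indices $(i,j)$, the difference
$$\delta_{ij}(s) = (\beta_i + t_i\gamma_s) - (\beta_j + t_j\gamma_s) = (\beta_i - \beta_j) + (t_i - t_j)\gamma_s \in G.$$
Since the $t_i$ are distinct integers, $t_i - t_j \neq 0$, and because an ordered abelian group is torsion-free, multiplication by the nonzero integer $t_i - t_j$ is injective and strictly monotone (order preserving if $t_i-t_j>0$, reversing otherwise). As $\{\gamma_s\}_{s<\lambda}$ is strictly increasing, being a monotone, well-ordered set of distinct elements, the map $s \mapsto (t_i-t_j)\gamma_s$ is strictly monotone, hence so is $s \mapsto \delta_{ij}(s)$. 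A strictly monotone map into a totally ordered set is injective, so it attains any fixed value — in particular $0$ — at most once. Therefore for each pair $(i,j)$ there is at most one ordinal $s$ with $\beta_i + t_i\gamma_s = \beta_j + t_j\gamma_s$.

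Next I would collect these finitely many collisions. There are at most $\binom{m}{2}$ unordered pairs, each contributing at most one ordinal at which a collision occurs, so the set $B$ of ordinals $s < \lambda$ for which the values $\beta_1 + t_1\gamma_s, \dots, \beta_m + t_m\gamma_s$ fail to be pairwise distinct is finite. A finite set of ordinals below a limit ordinal is bounded, and $\lambda$ has no maximal element below it, so I may choose $\nu < \lambda$ with $\nu \geq \sup B$. Then for every $s > \nu$ the $m$ elements $\beta_i + t_i\gamma_s$ are pairwise distinct, which is the first assertion.

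For the \emph{in particular} statement the crucial point is to produce a single index $r$ that works uniformly in $s$, rather than merely a minimizer depending on $s$. Here I exploit monotonicity once more: for each pair $(i,j)$, once $s$ exceeds the unique possible zero of $\delta_{ij}$, the sign of $\delta_{ij}(s)$ is fixed and nonzero. Since $\nu$ lies past all such zeros, every $\delta_{ij}(s)$ has a constant nonzero sign for $s > \nu$; equivalently, the total order that $G$ induces on $\{\beta_1 + t_1\gamma_s, \dots, \beta_m + t_m\gamma_s\}$ is the same for all $s > \nu$. Taking $r$ to be the index realizing the minimum of this frozen ordering yields $\beta_i + t_i\gamma_s > \beta_r + t_r\gamma_s$ for all $i \neq r$ and all $s > \nu$, as required.

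The main obstacle I anticipate is precisely this uniformity of $r$: distinctness at each individual $s$ is immediate, but one must argue that the comparison pattern does not reshuffle as $s$ grows. The strict monotonicity of each $\delta_{ij}$ — resting on torsion-freeness of $G$ together with the strict increase of $(\gamma_s)$ — is what freezes the ordering beyond $\nu$ and delivers a common minimizer. One should also verify that $(\gamma_s)$ is genuinely strictly increasing; this is guaranteed by its being a well-ordered set of distinct elements, which prevents any $\delta_{ij}$ from vanishing on an interval of ordinals and keeps the count of collisions finite.
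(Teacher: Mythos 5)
Your proof of the first assertion (pairwise distinctness for large $s$) is correct and is more self-contained than the paper's, which simply cites Ostrowski/Schilling for the second statement and obtains the first by iterating it; you argue in the opposite direction. But the step that extracts the uniform index $r$ has a genuine gap. You choose $\nu$ past the finitely many ordinals where some $\delta_{ij}(s)=(\beta_i-\beta_j)+(t_i-t_j)\gamma_s$ \emph{vanishes}, and then claim that ``once $s$ exceeds the unique possible zero of $\delta_{ij}$, the sign of $\delta_{ij}(s)$ is fixed,'' so that the ordering of the $m$ values is frozen for $s>\nu$. This does not follow: there is no intermediate value theorem here, and a strictly monotone map from $\{s<\lambda\}$ into $G$ can change sign without ever taking the value $0$. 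Concretely, take $G=\ZZ$, $\lambda=\omega$, $\gamma_s=s$, $m=2$, $t_1=0$, $t_2=2$, $\beta_1=0$, $\beta_2=-19$: then $\delta_{12}(s)=19-2s$ never vanishes, so your collision set $B$ is empty and your recipe permits $\nu=0$, yet $\delta_{12}(s)>0$ for $s\le 9$ and $\delta_{12}(s)<0$ for $s\ge 10$, so the minimizing index switches after your $\nu$. The lemma's conclusion is still true, of course, but only after $\nu$ is enlarged --- and your stated justification for why no enlargement is needed is exactly the false step.

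The repair is short and stays within your framework: since each $\delta_{ij}$ is strictly monotone, the sets $\{s:\delta_{ij}(s)>0\}$ and $\{s:\delta_{ij}(s)<0\}$ are respectively a final and an initial segment of $\lambda$ (or vice versa), so each $\delta_{ij}$ changes sign at most once. There are finitely many pairs, hence finitely many sign-change ordinals, all below the limit ordinal $\lambda$; enlarging $\nu$ to exceed these as well as the zeros makes the ordering genuinely constant for $s>\nu$, and your common minimizer $r$ then exists. With this correction your argument is a valid and essentially elementary substitute for the citation the paper relies on.
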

\begin{proof} The known proof from  the quoted papers gives the second statement even when the integers  $t_i$ are not necessarily positive. Then apply  this statement iteratively to get the first statement.
\hfill\ \end{proof}

\begin{Lemma} \label{ka}
Let $V \subset V'$ be an  immediate extension of valuation rings, $K\subset K'$ its fraction field extension and  $(v_i)_{i<\lambda}$ an algebraic pseudo convergent sequence in $V$, which has a pseudo limit $x$ in $V'$, but no pseudo limit in $K$. Set\\
 $x_i=(x-v_i)/(v_{i+1}-v_i)$. Let $s\in {\bf N}$ be   the
 minimal degree of the polynomials $f\in V[Y]$ such that
 $\val(f(v_i))<\val(f(v_j))$ for large $i<j<\lambda$ and $g\in V[Y]$ a polynomial with $\deg g<s$. Then there exist $d\in V\setminus \{0\}$ and $u\in V[x_i]$ for some $i<\lambda$ with $g(x)=du$ and $\val(u)=0$.
\end{Lemma}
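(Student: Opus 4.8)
The plan is to isolate a single dominant term in the expansion of $g$ about $v_i$ and then to read off the factorization from Lemma~\ref{o1}. First I would record the numerical data of the pseudo convergent sequence and its pseudo limit. Writing $\gamma_i=\val(v_{i+1}-v_i)$, the family $(\gamma_i)_{i<\lambda}$ is strictly increasing with no last element, one has $\val(v_j-v_i)=\gamma_i$ for all $j>i$, and since $x$ is a pseudo limit, $\val(x-v_i)=\gamma_i$ as well. In particular $\val(x_i)=\val(x-v_i)-\val(v_{i+1}-v_i)=0$, so $x_i$ is a value-zero element of $V'$. Expanding $g$ about $v_i$ through its Hasse--Taylor coefficients $g_{[k]}\in V[Y]$, defined by $g(Y+Z)=\sum_k g_{[k]}(Y)Z^k$, gives
\begin{equation*}
g(x)=\sum_{k=0}^{\deg g} g_{[k]}(v_i)\,(x-v_i)^k=\sum_{k=0}^{\deg g} g_{[k]}(v_i)\,(v_{i+1}-v_i)^k\,x_i^{\,k},
\end{equation*}
with all coefficients $g_{[k]}(v_i)$ lying in $V$.

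The key input is the behaviour of these coefficient values as $i$ grows. Each $g_{[k]}$ has degree at most $\deg g<s$, so by the minimality of $s$ no $g_{[k]}$ can satisfy $\val(g_{[k]}(v_i))<\val(g_{[k]}(v_j))$ for large $i<j$. Combined with the standard dichotomy that the values of a polynomial along a pseudo convergent sequence are eventually monotone (Kaplansky; this itself can be derived from Lemma~\ref{o1} by induction on the degree), this forces each $\val(g_{[k]}(v_i))$ to be eventually constant, equal to some $\beta_k$ in the value group, with $\beta_k=\infty$ precisely when $g_{[k]}=0$. I would then fix an ordinal past which all of these finitely many stabilisations have already occurred. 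Establishing this eventual constancy is the main obstacle: the steps after it are bookkeeping, whereas here one must invoke the monotonicity dichotomy in exactly the form dictated by the minimality of $s$.

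Next, since $\val(x_i)=0$, the value of the $k$-th summand above equals $\beta_k+k\gamma_i$. I would apply Lemma~\ref{o1} to the finitely many elements $\beta_k$ (discarding the indices with $g_{[k]}=0$), the strictly increasing family $(\gamma_i)$, and the distinct integers $t_k=k$. This yields an ordinal $\nu<\lambda$ and a unique index $r$ with
\begin{equation*}
\beta_r+r\gamma_i<\beta_k+k\gamma_i\qquad\text{for all }k\neq r\text{ and all }i>\nu.
\end{equation*}
Because $g\neq0$ and the $v_i$ are eventually distinct, at least one $\beta_k$ is finite, whence the minimising index $r$ has $\beta_r$ finite and $g_{[r]}(v_i)\neq0$ for large $i$.

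Finally I would factor out the dominant term. Fix any $i>\nu$, and set $d=g_{[r]}(v_i)(v_{i+1}-v_i)^r\in V\setminus\{0\}$ together with
\begin{equation*}
u=x_i^{\,r}+\sum_{k\neq r}\frac{g_{[k]}(v_i)\,(v_{i+1}-v_i)^{k-r}}{g_{[r]}(v_i)}\,x_i^{\,k},
\end{equation*}
so that $g(x)=d\,u$ by the expansion above. For $k\neq r$ the coefficient of $x_i^{\,k}$ has value $(\beta_k+k\gamma_i)-(\beta_r+r\gamma_i)>0$, hence lies in the maximal ideal of $V$; together with the leading coefficient $1$ this shows $u\in V[x_i]$. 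Moreover the summand $x_i^{\,r}$ has value $0$ while every other summand has strictly positive value, so $\val(u)=0$. This produces the required $d\in V\setminus\{0\}$ and $u\in V[x_i]$ with $g(x)=du$ and $\val(u)=0$, completing the argument.
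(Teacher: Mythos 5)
Your proof is correct and follows essentially the same route as the paper: the Hasse--Taylor expansion of $g$ about $v_i$, eventual constancy of $\val(g_{[k]}(v_i))$ forced by the minimality of $s$ together with the monotonicity dichotomy, and Lemma~\ref{o1} to isolate a uniquely dominant term, which is then factored out. The only (harmless) difference is that the paper goes one step further and shows the dominant index must be $r=0$ --- since $\val(g(x))$ is a fixed value it cannot equal the increasing quantity $\beta_k+k\gamma_i$ for $k>0$ --- so that $d=g(v_i)$ and $u\in 1+\mm'\cap V[x_i]$, whereas you keep a general $r$; both versions satisfy the stated conclusion.
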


\begin{proof}  In the Taylor expansion\footnote{The polynomials $D^{(n)}f \in R[Y]$ for $f \in R[Y]$, the so called Hasse-Schmidt derivatives, make sense for any ring $R$: indeed, one constructs the Taylor expansion in the universal case $R = {\bf Z}[a_0, \dots, a_{\deg f}]$ by using the equality $n! \cdot (D^{(n)} f) = f^{(n)}$ and verifying over $\Frac(R)$.}

\[
\tst  g(x) = \sum_{n = 0}^{\deg g} (D^{(n)}g)(v_i) \cdot (x - v_i)^n \qxq{with}  D^{(n)}g \in V[Y]
\]	
the values $\val((D^{(n)}g)(v_i) \cdot (x - v_i)^n)$ are pairwise distinct for all $n\leq \deg g$ and every large enough $i$ because $\val(x-v_i)=\val(v_{i+1}-v_i)$ increases,  $\deg (D^{(n)}g)< s$ and so $\val((D^{(n)}g)(v_i))$ is constant for all $n\leq \deg g$ and $i$ large (we could also apply Lemma \ref{o1}). 
We cannot have 
 $\val((D^{(n)}g)(v_i) \cdot (x - v_i)^n)<\val(g(v_i))$
for some $n>0$ and $i$ large because otherwise we get from above that
$\val(g(x))= \val((D^{(n)}g)(v_i) \cdot (x - v_i)^n)$
increases, which is false.
It follows that
 $\val((D^{(n)}g)(v_i) \cdot (x - v_i)^n)>\val(g(v_i))$
for all $n>0$ and $i$ large and so 
$$g(x)=g(v_i)+ \sum_{n = 1}^{\deg g} (D^{(n)}g)(v_i)  (v_{i+1} - v_i)^n x_i^n\in g(v_i)(1+\mm'\cap V[x_i]),$$
which is enough. 
\hfill\ \end{proof}

\begin{Lemma} \label{kap}
Let $V \subset V'$ be an  immediate extension of valuation rings, $K\subset K'$ its fraction field extension and  $(v_i)_{i<\lambda}$ an algebraic pseudo convergent sequence in $K$, which has a pseudo limit $x$ in $V'$, algebraic over $V$, but no pseudo limit in $K$. Assume that $h=$Irr$(x,K)$ is from $V[X]$. Then 
\begin{enumerate}
\item $\val(h(v_i))<\val(h(v_j))$ for large $i<j<\lambda$,
\item if $h$ has minimal degree among the polynomials $f\in V[X]$ such that
 $\val(f(v_i))<\val(f(v_j))$ for large $i<j<\lambda$, then $V''=V'\cap K(x)$  is a filtered  union of its complete intersection $V$-subalgebras. 
\end{enumerate}
\end{Lemma}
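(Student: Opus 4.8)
The plan is to handle the two assertions separately. For (1) I would expand $h$ about the pseudo limit $x$: since $h(x)=0$, the Taylor formula gives
\[
h(v_i)=\sum_{n=1}^{\deg h}(D^{(n)}h)(x)\,(v_i-x)^n ,
\]
the $n=0$ term dropping out. With $\gamma_i=\val(v_i-x)=\val(v_{i+1}-v_i)$ increasing without last element and $(D^{(\deg h)}h)(x)=1$ the leading coefficient of the monic $h$ (so not every summand vanishes), the summands have values $\val((D^{(n)}h)(x))+n\gamma_i$. Applying Lemma \ref{o1} with the distinct integers $n\ge 1$ in the role of the $t_i$, there is $\nu<\lambda$ past which these values are pairwise distinct and their minimum is attained at a single index $r\ge1$; hence $\val(h(v_i))=\val((D^{(r)}h)(x))+r\gamma_i$ for $i>\nu$, which is strictly increasing in $i$ since $r\ge1$. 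That is exactly (1).

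For (2), write $s=\deg h$ and set $x_i=(x-v_i)/(v_{i+1}-v_i)$, which has value $0$ and generates $K(x)$ over $K$. I would first normalise the shifted polynomial: put $p_i(X)=h(v_i+(v_{i+1}-v_i)X)/h(v_i)$, whose coefficient of $X^n$ is $(D^{(n)}h)(v_i)(v_{i+1}-v_i)^n/h(v_i)$. The Taylor expansion of $h$ about $v_i$ evaluated at $x$ vanishes, while among its positive-degree terms the minimum value is attained uniquely (Lemma \ref{o1}); forcing a cancellation, this compels $\val(h(v_i))=\min_{n\ge1}\bigl(\val((D^{(n)}h)(v_i))+n\gamma_i\bigr)$. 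Consequently every coefficient of $p_i$ has nonnegative value, so $p_i\in V[X]$, with constant term $1$ and $p_i(x_i)=0$.

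The key point is then that the reciprocal polynomial $q_i(Y)=Y^s p_i(1/Y)$ is monic, lies in $V[Y]$, and satisfies $q_i(x_i^{-1})=x_i^{-s}p_i(x_i)=0$; thus $x_i^{-1}$ is integral over $V$ and $V[x_i^{-1}]=V[Y]/(q_i)$. Because $p_i$ has constant term $1$, one reads off $x_i^{-1}\in V[x_i]$, so $V[x_i]=V[x_i^{-1}]_{x_i^{-1}}=\bigl(V[Y]/(q_i)\bigr)_{\bar Y}=V[Y,Y^{-1}]/(q_i)$. Here $V[Y,Y^{-1}]$ is smooth over $V$ and $q_i$ is a single regular element, so each $V[x_i]$ is a complete intersection $V$-subalgebra of finite type. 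It remains to see that these exhaust $V''$: given $\alpha\in V''$, write $\alpha=g(x)$ with $g\in K[X]$ of degree $<s$; clearing denominators and running the computation of Lemma \ref{ka} (legitimate because $\deg g<s$ makes $\val(g(v_i))$ eventually constant, by minimality of $s$) gives $\alpha=g(v_i)\,u$ with $u\in V[x_i]$ a unit and $\val(g(v_i))=\val(\alpha)\ge0$, so $g(v_i)\in V$ and $\alpha\in V[x_i]$. Finally $x_i\in V[x_j]$ for $i<j$, since the coefficients $(v_j-v_i)/(v_{i+1}-v_i)$ and $(v_{j+1}-v_j)/(v_{i+1}-v_i)$ have values $0$ and $\gamma_j-\gamma_i>0$ respectively, hence lie in $V$; so the family is filtered and $V''=\bigcup_{i<\lambda}V[x_i]$.

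The main obstacle is the structural identification in the middle step: $x_i$ is only a unit of $V''$ and is generally not integral over $V$, so $V[x_i]$ is not directly a monic quotient; the resolution is to pass to $x_i^{-1}$, which is integral, and to recognise $V[x_i]$ as the localisation $V[x_i^{-1}]_{x_i^{-1}}$ of a finite free complete intersection. Making this work hinges on the valuation bookkeeping that produces $p_i\in V[X]$ with unit constant term, i.e.\ on the identity $\val(h(v_i))=\min_{n\ge1}(\val((D^{(n)}h)(v_i))+n\gamma_i)$; the surjectivity onto $V''$ then rests squarely on Lemma \ref{ka} and hence on the minimal-degree hypothesis.
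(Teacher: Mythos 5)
Your argument is correct, and for part (2) it follows the same skeleton as the paper: the same elements $x_i=(x-v_i)/(v_{i+1}-v_i)$, the same appeal to Lemma \ref{ka} to show that every element of $V''$ already lies in some $V[x_i]$, and the same observation that $x_i\in V[x_j]$ for $i<j$ makes the family filtered. The genuine differences are these. For (1) the paper simply cites \cite[Corollary 5.5]{KV}, whereas you give a self-contained proof by Taylor-expanding $h$ at the pseudo limit $x$ (the $n=0$ term vanishing since $h(x)=0$) and applying Lemma \ref{o1}; this is a real simplification and, correctly, uses neither the minimality of $\deg h$ nor anything beyond the immediacy of the extension. For the complete-intersection presentation of $V[x_i]$, the paper factors $g_i=h(v_i+(v_{i+1}-v_i)X_i)$ as a constant times a primitive polynomial $h_i$ and asserts $V[x_i]\cong V[X_i]/(h_i)$, which tacitly requires a Gauss-lemma argument over $V$ to see that $(h_i)$ is the whole kernel; your normalization $p_i=g_i/h(v_i)$, justified by the identity $\val(h(v_i))=\min_{n\ge1}\bigl(\val((D^{(n)}h)(v_i))+n\gamma_i\bigr)$ (which does follow from $h(x)=0$, Lemma \ref{o1}, and the eventual constancy of $\val((D^{(n)}h)(v_i))$ forced by the minimality of $\deg h$), produces essentially the same primitive polynomial but with unit constant term, and passing to the monic reciprocal $q_i$ lets you pin down the kernel by plain division by a monic polynomial, exhibiting $V[x_i]=V[Y,Y^{-1}]/(q_i)$ directly as a quotient of a smooth $V$-algebra by a regular element. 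Finally you get $V''=\bigcup_i V[x_i]$ on the nose, where the paper passes to the localizations $V[x_i]_{\mm'\cap V[x_i]}$; both are legitimate, since $d/t\in V$ in the exhaustion step. In short: same route, with the citation in (1) replaced by a direct argument and the algebraic identification in (2) made cleaner.
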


\begin{proof} The first part follows from \cite[Corollary 5.5]{KV}. 
 Now, assume that $\deg h$ is minimal among $f$ having the property from (2). As in Lemma \ref{ka} we consider $(x_i)_{i<\lambda}$ and note that for a polynomial $g\in V[X]$ of degree $<\deg h$ we have $g(x)=du$ for some $d\in V\setminus \{0\}$ and $u\in V[x_i]$ for some $i<\lambda$ with $\val (u)=0$. An element of $V''$ has the form 
  $g(x)/t$ for some $g\in V[X]$ with $\deg g<\deg h$ and $t\in V^\setminus \{0\}$ such that $\val(g(x))\geq \val(t)$.  
 It follows that
$g(x)/t=(d/t)u\in  V[x_i]_{\mm'\cap V[x_i]}$, where $\mm'$ is the maximal ideal of $V'$. 

 Thus $V''$ is  the union of $(V[x_i]_{\mm'\cap V[x_i]})_{i< \lambda}$, which is
filtered increasing as in the proof of  \cite[Lemma 3.2]{Po}, or \cite[Lemma 15]{P}.
We have $V[x_i]\cong V[X_i]/(h_i)$, where $h_i$ is defined below.
Set $g_i=h(v_i+(v_{i+1}-v_i)X_i)\in V[X_i]$. By construction $x_i=(x-v_i)/(v_{i+1}-v_i)$ and so $g_i(x_i)=h(x)=0$. As $h$ is irreducible in $K[X]$ we get $g_i$ irreducible in $K[X_i]$ too because it is obtained from $h$ by a linear transformation. So $K[x_i]\cong K[X_i]/(g_i)$. We have $g_i=uh_i$ for some primitive polynomial $h_i$ of $V[X_i]$ and a nonzero constant $u$ of $K$. 
 Then $V''$ is a filtered  union of its  complete intersections $V$-subalgebras.   
\hfill\ \end{proof}

\begin{Remark}\label{r0}  The extension $V\subset V''$ from (2) of the above lemma is isomorphic with the one constructed in \cite[Theorem 3]{Kap}. 
\end{Remark}

We need the following elementary lemma.  

\begin{Lemma} \label{com} 
Let $B$ be a complete intersection algebra over a ring $A$ and $C$ a complete intersection  algebra over $B$. Then  $C$ is a complete intersection algebra over $A$.
\end{Lemma}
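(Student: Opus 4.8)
The plan is to realize $C$ explicitly as a quotient of a smooth $A$-algebra of finite type by a regular sequence. Write $B = G/(P_1,\dots,P_r)$ with $G$ smooth of finite type over $A$ and $P_1,\dots,P_r$ a regular sequence in $G$, and $C = H/(Q_1,\dots,Q_s)$ with $H$ smooth of finite type over $B$ and $Q_1,\dots,Q_s$ a regular sequence in $H$. The idea is to lift $H$ from $B$ to a smooth $G$-algebra $\tilde H$, then to assemble the two regular sequences into a single regular sequence in $\tilde H$ whose quotient is $C$.

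First I would lift $H$ to a smooth $G$-algebra. Present $H$ as a standard smooth $B$-algebra, say $H \cong (B[Y_1,\dots,Y_n]/(f_1,\dots,f_c))_{\bar\Delta}$, where $\bar\Delta$ is the image of the Jacobian determinant $\det(\partial f_i/\partial Y_j)_{1\le i,j\le c}$ and is a unit in $H$. Lifting each $f_i$ to $\tilde f_i \in G[Y_1,\dots,Y_n]$ along the surjection $G \to B = G/(P)$, and setting $\tilde H = (G[Y_1,\dots,Y_n]/(\tilde f_1,\dots,\tilde f_c))_{\Delta}$ with $\Delta = \det(\partial \tilde f_i/\partial Y_j)$, the Jacobian criterion shows $\tilde H$ is standard smooth, hence smooth of finite type, over $G$; moreover $\tilde H \otimes_G B \cong H$ since reducing modulo $(P)$ recovers the presentation of $H$ and $\bar\Delta$ is already invertible there. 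As $G$ is smooth over $A$ and smooth morphisms compose, $\tilde H$ is smooth of finite type over $A$.

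Next I would lift $Q_1,\dots,Q_s \in H$ to elements $\tilde Q_1,\dots,\tilde Q_s \in \tilde H$ and regard $P_1,\dots,P_r$ as elements of $\tilde H$ via $G \to \tilde H$. Since $\tilde H/(P)\tilde H \cong H$ and $H/(Q) \cong C$, one gets $C \cong \tilde H/(P_1,\dots,P_r,\tilde Q_1,\dots,\tilde Q_s)$. It then remains to check that this concatenation is a regular sequence in $\tilde H$. Because $\tilde H$ is smooth, hence flat, over $G$ and $\tilde H/(P)\tilde H = H \neq 0$, the $G$-regular sequence $P_1,\dots,P_r$ stays $\tilde H$-regular by flat base change of regular sequences. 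Modulo $(P)$ the lifts $\tilde Q_j$ reduce to the $Q_j$, which form a regular sequence on $\tilde H/(P)\tilde H = H$ by hypothesis; hence $P_1,\dots,P_r,\tilde Q_1,\dots,\tilde Q_s$ is a regular sequence in $\tilde H$. Thus $C$ is the quotient of the smooth finite-type $A$-algebra $\tilde H$ by a regular sequence, i.e. a complete intersection $A$-algebra of finite type.

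I expect the lifting step to be the main obstacle: one must produce a smooth $G$-algebra reducing to $H$ over $B = G/(P)$ even though the ideal $(P)$ need not be nilpotent, so infinitesimal lifting along a square-zero filtration is not available. This is precisely why choosing a standard smooth presentation and invoking the Jacobian criterion is essential, since it makes the lift $\tilde H$ completely explicit and its smoothness over $G$ automatic; once $\tilde H$ is in hand, the regularity bookkeeping is routine via flatness of $\tilde H$ over $G$.
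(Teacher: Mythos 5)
Your core argument --- concatenating the two regular sequences and verifying that the second block is regular modulo the first --- is exactly the paper's, and that part of your bookkeeping is correct. The gap is in your very first step: you ``present $H$ as a standard smooth $B$-algebra.'' A smooth algebra of finite type is only \emph{locally} standard smooth: there is a cover of $\Spec H$ by principal opens $D(g)$ with each $H_g$ standard smooth, but a single global presentation $(B[Y_1,\dots,Y_n]/(f_1,\dots,f_c))_{\bar\Delta}$ need not exist. Indeed, a standard smooth presentation forces $\Omega_{H/B}$ to be free, and this fails for instance for $H=\mathbf{R}[x,y,z]/(x^2+y^2+z^2-1)$ over $B=\mathbf{R}$. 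Since the ideal $(P)\subset G$ is not nilpotent, you cannot fall back on infinitesimal lifting either; as you yourself observe, the standard smooth presentation carries the entire weight of the lifting step, so its unavailability is a real hole rather than a cosmetic one.

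The repair is to use the other normal form the paper allows for complete intersection algebras, namely $T_h/(P)$ with $T$ a polynomial ring --- equivalently $A[X]/(f)$ after absorbing the localization via $T_h\cong T[Z]/(hZ-1)$, noting that $hZ-1$ begins a regular sequence in $T[Z]$. With $B\cong A[X]/(f)$ and $C\cong B[Y]/(g)$ there is nothing to lift at the smooth level: $A[X,Y]$ surjects onto $B[Y]$ automatically, $f$ stays regular in $A[X,Y]$ by flatness of $A[X]\to A[X,Y]$, lifts $G_i\in A[X,Y]$ of the $g_i$ form a regular sequence modulo $(f)$ by hypothesis, and $C\cong A[X,Y]/(f,G)$. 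That is precisely the paper's proof, and your final regularity argument coincides with it. Alternatively, if you insist on working with the $G/(P)$ form, you must first invoke the equivalence with $T_h/(P)$ (for which the paper cites Swan) so that $H$ genuinely is standard smooth over $B$; only then does your explicit Jacobian lift $\tilde H$ exist.
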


\begin{proof} Suppose that $B\cong A[X]/(f)$, where $X=(X_1,\ldots,X_n)$ and $f=(f_1,\ldots,f_r)$ is a regular sequence of elements in $A[X]$. Similarly, $C\cong B[Y]/(g)$, where $Y=(Y_1,\ldots,Y_m)$ and $g=(g_1,\ldots,g_s)$ is a regular sequence of elements in $B[Y]$. Since $A[X,Y]$ is flat over $A[X]$ we see that $f$ is a regular sequence of elements in $A[X,Y]$. Note that $g_i$ is a residue class modulo $f A[X,Y]$ of a polynomial $G_i\in A[X,Y]$,  $1\leq i\leq s$.

We claim that $f,G$ form a regular sequence of elements of $A[X,Y]$. It is enough to see that $G$ is a  regular sequence of elements of $A[X,Y]/(f)\cong B[Y]$, which is true by assumption.
So, $ C\cong A[X,Y]/(f,G)$ is a complete intersection $A$-algebra.
\hfill\ \end{proof}

We recall some results from \cite{P1} which we need in the proof of the Proposition \ref{pr}. These results are stated in \cite{P1} for the valuation rings containing a field but their proofs work easier in the mixed characteristic valuation rings.

\begin{Lemma}(\cite[Corollary 17]{P1} \label{co}
Let $V \subset V'$ be an  immediate extension of valuation rings,  $K, K'$ the fraction fields of $V, V'$, $\mm$ the maximal ideal of $V$ and  $y\in K'$ an  element  which is not in  $K$. 
Assume that  $y$ is  a pseudo limit of a pseudo convergent sequence $v=(v_j)_{1\leq j<\lambda}$ over $V$, which  has no pseudo limit in $K$.
 Set $y_j=(y-v_j)/(v_{j+1}-v_j)$.
Then for every nonzero  polynomial  $g\in V[Y]$  and every ordinal $1\leq \nu<\lambda$ one of the following statements holds.

\begin{enumerate}
\item  There exist some  $\nu<t<\lambda$ and a polynomial $g_1\in V[Y_t]$ such that 
$g(y)=g_1(y_t)$
 and    $g_1=g_1(0)+c  Y_t+g_2,$ 
for some $c\in V\setminus \{0\}$ and  $g_2\in  c\mm Y_t^2 V[Y_t]$.
\item 
There exist some  $\nu<t<\lambda$ and a polynomial $g_1\in V[Y_t]$ such that 
$y g(y)=g_1(y_t)$
 and    $g_1=g_1(0)+c  Y_t+g_2,$ 
for some $c\in V\setminus \{0\}$ and  $g_2\in  c\mm Y_t^2 V[Y_t]$.
\end{enumerate}
\end{Lemma}

\begin{Lemma}(\cite[Corollary 20]{P1} \label{co'}
Let $V \subset V'$ be an  immediate extension of valuation rings,  $K, K'$ the fraction fields of $V, V'$, $\mm$ the maximal ideal of $V$ and  $y_1,y_2\in K'$ two  elements  which are not in  $K$. 
Assume that  $y_i$, $i=1,2$ are   pseudo limits of two pseudo convergent sequences $v_i=(v_{i,j})_{1\leq j<\lambda_i}$, $i=1,2$ over $V$, which have no pseudo limits in $K$.
 Set $y_{i,j}=(y_i-v_{i,j})/(v_{i,j+1}-v_{i,j})$.
Then for every nonzero polynomial  $f\in V[Y_1,Y_2]$  and every two ordinals $\nu_i<\lambda_i$ 
there exist some  $1\leq \nu_i<t_i<\lambda_i$, $i=1,2$ and a polynomial $g\in V[Y_{1,t_1}, Y_{2,t_2}]$
 such that $g=g(0)+c_1  Y_{1,t_1}+c_2  Y_{2,t_2}+g',$ 
for some $c_1,c_2\in V$, at least one of them nonzero,   $g'\in  (c_1,c_2)\mm (Y_{1,t_1},Y_{2,t_2})^2   V[Y_{1,t_1}, Y_{2,t_2}]$ 
and $g(y_{1,t_1},y_{2,t_2})$ is one of the following elements
$f(y_1,y_2)$, \ $y_1f(y_1,y_2)$, \ $y_2f(y_1,y_2)$,  $y_1y_2f(y_1,y_2)$.
\end{Lemma}

 Next lemma is inspired by \cite[Lemma 22]{P1}.
\begin{Lemma}  \label{1}
 Let  $ V'$ be an  immediate extension  of a valuation ring $V$, $\mm, \mm'$ the maximal ideals of $V,V'$, $K\subset K'$ their fraction field extension and $y\in V'$, $y\not \in K$ an unit and algebraic element over $K$. 
Let  $f\in V[Y]$ be a nonzero polynomial and some $d\in V\setminus \{0\}$ and an unit $z\in V'$, $z\not \in K$ such that $f(y)=d z$. Then there exists 
 a complete intersection  $V$-subalgebra  of $V'$ containing $y,z$.  
\end{Lemma}
\begin{proof}
By \cite[Theorem 1]{Kap}  $y$ is a pseudo limit of a pseudo convergent sequence $v=(v_j)_{j<\lambda}$, which 
 has no pseudo limits in $K$.  Set $y_j=(y-v_j)/(v_{j+1}-v_j)$. 

 By Lemma \ref{co} there exist some  $j<\lambda$ and a polynomial $h\in V[Y_j]$ such that either 
$f(y)=h(y_j)$, or $y f(y)=h(y_j)$
 and  $h=h(0)+c  Y_j+h',$
for some   $c\in V\setminus \{0\}$ and $h'\in  c\mm Y_j^2 V[Y_j]$.   If $d|c$ then $d|h(0)$. If $f(y)=h(y_j)$ then  $z=h(y_j)/d$ is contained in  $V[y_j]_{\mm'\cap V[y_j]}$ and $y$ is too. We recall as in Lemma \ref{kap} that $V[y_j]_{\mm'\cap V[y_j]}$ is a complete intersection algebra over $V$.
 If $yf(y)=h(y_j)$  then $yz=h(y_j)/d$ and again  $  yz$ is contained in  $V[y_j]_{\mm'\cap V[y_j]}$  and so $y$ is too. Thus $z\in V[y_j]_{\mm'\cap V[Y_j]}$.

Now assume that $d$ does not divide $c$. Then $c|d$ and $c|h(0)$. If $f(y)=h(y_j)$ then consider  $g=(h-dZ)/c$ and we see that $y,z$ are contained in the etale $V[z]_{\mm'\cap V[z]}$-subalgebra $V[y_j,z]_{\mm'\cap V[y_j,z]}$ of $V'$. As in Lemma  \ref{kap} we see that $V[z]_{\mm'\cap V[z]}$ is a complete intersection over $V$. If $yf(y)=h(y_j)$ then consider $g=(h-d YZ)/c$ and we see that $y_j$ and  $yz$ are contained in the etale $V[yz]_{\mm'\cap V[yz]}$-subalgebra $V[y_j, yz]_{\mm'\cap V[y_j,yz]}$ of $V'$, which contains also $y$ and so $z$. In both cases $y,z$ are contained in a complete intersection $V$-subalgebra of $V'$ using Lemma \ref{com}. 
\hfill\ \end{proof}

\begin{Proposition} \label{pr} Let  $ V'$ be an  immediate extension  of a    valuation ring $V$,  $K\subset K'$ their fraction field extension  and $K'=K(y)$ for some $y\in V'$, $y\not\in K$ unit and  algebraic over $K$.   Then $V'$ is 
a filtered
 union of its complete intersection $V$-subalgebras.
\end{Proposition}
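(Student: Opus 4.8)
The plan is to induct on $n=[K':K]$. For $n=1$ the immediate extension $V\subset V'$ has the same fraction field, so $V=V'$ and there is nothing to prove; hence assume $n>1$ and $x\notin K$. The strategy is to peel off one intermediate field at a time by means of Lemma \ref{min}, to dispatch the ``irreducible'' base case by Lemma \ref{kap} (2), and to reassemble the pieces along the resulting tower using the transitivity of complete intersections supplied by Lemma \ref{com}.

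First I would apply Lemma \ref{min} to $V\subset V'$ and $K'=K(x)$. This produces a field $K\subseteq K_1\subsetneq K'$, the valuation ring $V_1=V'\cap K_1$, an algebraic pseudo convergent sequence $v=(v_j)_{j<\lambda}$ in $V_1$ with pseudo limit $x$ and no pseudo limit in $K_1$, and a polynomial $h\in V_1[Y]$ of minimal degree $s=\deg h>1$ among the value-increasing polynomials of $v$, such that $v$ has no pseudo limit in $L:=K_1(h(x))$; here $[K':L]=s$. I would then split according to whether $h$ is, up to a nonzero scalar of $K_1$, the irreducible polynomial of $x$ over $K_1$.

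In the case $h=\mathrm{Irr}(x,K_1)$, equivalently $h(x)=0$, that is $L=K_1$ and $s=[K':K_1]$, Lemma \ref{kap} (2) applies directly over $K_1$ and shows that $V'\cap K_1(x)=V'$ is a filtered union of its complete intersection $V_1$-subalgebras; here one only needs to take the minimal value-increasing $h$ primitive in $V_1[Y]$, exactly as in the proof of Lemma \ref{kap} (2). In the complementary case $h(x)\neq 0$ one has a genuine tower $K\subseteq K_1\subsetneq L\subsetneq K'$ with $[K':L]=s<n$ and $[L:K]=n/s<n$; writing $V_L=V'\cap L$, the primitive extension $K'=L(x)$ of $L$ has degree $s<n$, so the induction hypothesis already gives that $V'$ is a filtered union of complete intersection $V_L$-subalgebras. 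In either case it remains to descend from the intermediate ring, $V_1$ or $V_L$, to $V$: the intermediate field has degree $<n$ over $K$ and, although it need not be simple over $K$ in characteristic $p$, I would present it as a finite tower of simple subextensions obtained by adjoining generators one at a time and apply the induction hypothesis to each step, each step having degree $<n$.

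The remaining task is to glue these two halves, and this is the step I expect to be the main obstacle. What is required is that the property of being a filtered union of complete intersection subalgebras be transitive along the tower: if $C$ is such a union over $B$ and $B$ is such a union over $A$, then $C$ is such a union over $A$. Concretely, a finite type complete intersection $B$-subalgebra $C_\beta\subseteq C$ is generated by finitely many elements of $V'$ subject to finitely many relations, whose coefficients already lie in some complete intersection $A$-subalgebra $B_\alpha\subseteq B$; one then wants $C_\beta$ to be a complete intersection over $B_\alpha$, and hence over $A$ by Lemma \ref{com}, with these subalgebras exhausting $C$ in a filtered way. Everything here lives inside the domain $V'$, so all the inclusions are automatically injective; the delicate point is rather that the complete intersections are not assumed flat over the base, so one must check that the defining regular sequence stays regular after descending from $B$ to $B_\alpha$, so that Lemma \ref{com} can legitimately be invoked at each node of the tower $K\subset\cdots\subset K_1\subset K'$, respectively $K\subset\cdots\subset L\subset K'$.
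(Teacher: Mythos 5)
Your overall architecture --- Lemma \ref{min} to produce $K_1$, the sequence $v$ and the minimal value-increasing polynomial $h$; Lemma \ref{kap}~(2) for the ``irreducible'' case; induction on the degree assembled with Lemma \ref{com} --- is the same as the paper's, but your case division has a genuine gap. You split on $h(x)=0$ versus $h(x)\neq 0$ and claim that in the second case one gets a \emph{strict} tower $K_1\subsetneq L=K_1(h(x))\subsetneq K'$. This is false: $h(x)\neq 0$ does not force $h(x)\notin K_1$. If $h(x)$ is a nonzero element of $K_1$ then $L=K_1$, and in the worst sub-case $K_1=K$ and $h(x)\in K\setminus\{0\}$ there is no proper intermediate field at all, so your case~2 induction never gets started (there $[K':L]=n$), while your case~1 does not apply because $h$ is not $\mathrm{Irr}(x,K_1)$ --- the two differ by the constant $h(x)$, and a constant shift need not preserve the value-increasing property, so you cannot silently replace $h$ by $h-h(x)$ without an argument.

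The missing idea is the one the paper uses to treat all cases uniformly: set $L=K_1(h(x))$, $V''=V'\cap L$ and $g=\mathrm{Irr}(x,L)$. Via Lemma \ref{kap}~(1) one gets $\deg h\le\deg g$; on the other hand $x$ is a root of $g'=h(Y)-h(x)\in L[Y]$ of degree $\deg h$, so $g$ divides $g'$, forcing $\deg g=\deg g'$ and $g$ to be a unit multiple of $h(Y)-h(x)$, which is therefore of minimal degree among value-increasing polynomials over $V''$. Lemma \ref{kap}~(2) then applies over $L$ (not over $K_1$) and shows that $V'$ is a filtered union of complete intersection $V''$-subalgebras in \emph{every} case, including $h(x)\in K_1\setminus\{0\}$; since $[L:K]=n/\deg h<n$, the induction hypothesis (run through a tower of simple subextensions of $L/K$, as you describe) handles $V\subset V''$, and Lemma \ref{com} finishes. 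Your closing worries about transitivity and about regular sequences descending from $B$ to $B_\alpha$ are reasonable, but they are exactly what Lemma \ref{com} together with the standard filtered-union bookkeeping is invoked for in the paper; the real defect is the case analysis above.
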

\begin{proof}
Let
 $f_1,\ldots, f_n\in V[Y]$ be some  polynomials such that $f_e(y)\not = 0$, $f_e(y)\not \in K$, $1\leq e\leq n$. Then there exist some $d_e\in V\setminus \{0\}$ and some units $y_e\in V'$, $y_e\not \in K$ such that $f_e(y)=d_ey_e$, $1\leq e\leq n$. We claim that there exists 
 a complete intersection $V$-subalgebra $A_{f_1,\ldots,f_n}$ of $V'$ containing $(y_e)$.  More precisely, we will choose $A_{f_1,\ldots,f_n}$ to be an etale algebra over a complete intersection $V$-algebra.
  Set $y_0=y$. By \cite[Theorem 1]{Kap}  $y_e$ is a pseudo limit of a pseudo convergent sequence $v_e=(v_{e,j_e})_{j_e<\lambda_e}$, which has no pseudo limits in $K$,  $0\leq e\leq n$. 
   Set $y_{e,j_e}=(y_e-v_{e,j_e})/(v_{e,j_e+1}-v_{e,j_e})$. 
	
	Apply induction on $n$. If $n=1$ we apply Lemma \ref{1} and we get an etale \\
	 $V[y_{0,j_0}]_{\mm'\cap V[y_{0,j_0}]}$-subalgebra  of $V'$  for some $j_0$ containing $y_0,y_1$. 
Assume that $n\geq 2$. Using induction hypothesis on $n$ we may suppose after a change of the numbering of $(y_e)$
that there exist $j_e<\lambda_e$, $0\leq e< n$ and  $h_1,\ldots, h_{n-1}\in V[Y_{0,j_0},\ldots,Y_{n-1,j_{n-1}}]$
such that $h_i((y_{e',j_{e'}}))=0$, $1\leq i< n$ and the determinant of the matrix 
$$((\partial h_i/\partial Y_{e,j_{e}}) ((y_{e',j_{e'}})))_{1\leq i < n, 0\leq e\leq n-2}$$ is not in $\mm'.$

As in the proof of Lemma \ref{1} we have to consider two cases. First assume that applying Lemma \ref{1} we arrive in the second case, namely that for some $j_0 <\lambda_0$ there exists $h_1\in V[Y_{0,j_0},Y_1]$ such that
$h_1(y_{0,j_0},y_1)=0$ and $(\partial h_1/\partial Y_{0,j_0})(y_{0,j_0},y_1)\not \in \mm'$.

 Clearly $y_{n-1}$ is algebraic over $K(y_n)$. Thus there exists a  nonzero polynomial $g_{n-1,n}\in K[Y_{n-1},Y_n]$ with $g_{n-1,n}(y_{n-1},y_n)=0$. We may choose $g_{n-1,n}\in V[Y_{n-1}, Y_n]$.

By Lemma \ref{co'}
there exist some  $j_{n-1}<t_{n-1}<\lambda_{n-1}$, $t_n<\lambda_n$ and a polynomial $h_{n-1,n}\in V[Y_{n-1,t_{n-1}}, Y_{n,t_n}]$
 such that  
$$h_{n-1,n}- h_{n-1,n}(0)=c_{n-1} Y_{n-1,t_{n-1}}+c_n  Y_{n,t_n}+h_{n-1,n}'$$
for some $c_{n-1},c_n\in V$, at least one of them nonzero, and \\
 $h_{n-1,n}' \in  (c_{n-1},c_n)\mm V[Y_{n-1,t_{n-1}}, Y_{n,t_n}]$ 
and 
$h_{n-1,n}(Y_{n-1,t_{n-1}}, Y_{n,t_n})$ corresponds to one of the following polynomials $g_{n-1,n}(Y_{n-1},Y_n),$\  \ $Y_{n-1}g_{n-1,n}(Y_{n-1},Y_n)$,\\
 $Y_n g_{n-1,n}(Y_{n-1},Y_n)$,
$Y_{n-1}Y_n g_{n-1,n}(Y_{n-1},Y_n)$.
Set $t_q=j_q$ for $0\leq q\leq n-2$.
The change from $j_e$  to $t_e$ will modify a little the coefficients of $h_i$, $1\leq i<n-1$ but their forms remain.  

Assume that  $c_{n-1}\not =0$  and $\val(c_{n-1})\leq\val(c_n)$ 
then $c_{n-1}|h_{n-1,n}(0)$ because\\
 $h_{n-1,n}(y_{n-1,t_{n-1}},y_{n,t_n})=0$ and we may change 
$h_{n-1,n}$ by $h_{n-1}=h_{n-1,n}/c_{n-1}$ independently if it was obtained from $g_{n-1,n}(Y_{n-1},Y_n)$, or $Y_{n-1}g_{n-1,n}(Y_{n-1},Y_n)$, or $Y_n g_{n-1,n}(Y_{n-1},Y_n)$, or $Y_{n-1} Y_n g_{n-1,n}(Y_{n-1},Y_n)$.  Thus a minor of maximal rank of the matrix 
$((\partial h_i/\partial Y_{e,t_e}) ((y_{e',t_{e'}})))_{1\leq i \leq n, 0\leq e< n}$ is  not contained in $\mm'$.

  We get the  etale 
   $V[y_{n,t_n}]_{\mm'\cap V[y_{n,t_n}]}$-subalgebra \\
 $A_{f_1,\ldots,f_n}= V[y_{0,t_0},\ldots, y_{n,t_n}]_{\mm'\cap V[y_{0,t_0},\ldots, y_{n,t_n}}$  of $V'$ containing $(y_{e,t_e})$ and so all $y_e$.
Certainly, we could have above also $c_n\not =0$ and $\val(c_{n-1})>\val(c_n)$
in which case change  $f_n$ with $f_{n-1}$. Hence our claim is proved.

Now assume that applying Lemma \ref{1} we arrive in the first case of the proof, namely $y_1\in V[y_{0,j_0}]_{\mm'\cap V[y_{0,j_0}]}$ for some $j_0<\lambda_0$. Then we may replace $y_0$ by
$$(v_{0,j_0+1}-v_{0,j_0})y_0+v_{0,j_0}$$
and we may omit $y_1$, that is we denote $y_e$ by $y_{e-1}$ for $1<e\leq n$. So we arrive in the case $n-1$ when we apply the induction hypothesis to show our claim.

The family  $(A_{f_1,\ldots, f_n})$ given by all finite subsets $\{f_1,\ldots,f_n\}$ of nonzero polynomials of $V[Y_0]$ is filtered and consider 
 their union $\mathcal A$. We claim that $V'={\mathcal A}$. Indeed, 
 let $f\in V[Y_0]$ and $t\in V\setminus \{0\}$  with $\val(f(y_0))>\val(t)$  (a general element of $V'$ has this form ).  We can assume  $f(y_0)=dy_1$, for some $d \in V\setminus \{0\}$ and some unit $y_1\in V'$. For some $t_e<\lambda_e$, $0\leq e\leq 1$ we find as above a complete intersection $V$-subalgebra $A_f$ of $V'$ containing   $y_{0,t_0}$, $y_{1,t_1}$ and so containing
 $(y_e)$,  $0\leq e\leq 1$.
 As  
$\val(d)\geq \val(t)$  we get  
$f(y_0)/t=(d/t)y_1\in   A_f\subset {\mathcal A}.$
\hfill\ \end{proof}

{\bf Proof of Theorem \ref{T}.}

 Firstly assume  $K'/K$ is  finite, let us say $K'=K(x_1,\ldots,x_n)$. Set $V_i=V'\cap K(x_1,\ldots,x_i)$, $1\leq i< n$ and $V_0=V$, $V_n=V'$.
Then $V_{i+1}$ is a   filtered  union of its complete intersection $V_i$-algebras for all $0\leq i<n$ by Proposition \ref{pr} and Lemma \ref{com}, which is enough. In general, express $K'$ as a filtered  union of some subfields $(K_i)_i$ of $K'$ which are finite  extensions of $K$. Then $V'$ is a filtered increasing union of $V'\cap K_i$.

\begin{Remark} If $V$ is a valuation ring of characteristic $\not =0$ then an algebraic separable valuation ring extension of it may fail to be a filtered direct limit of smooth $V$-algebras, for instance  by \cite[ Example 3.13, Theorem 6.9, Remark 6.10]{Po1} (see \cite[Sect 9, No 57]{O}); but it is a filtered union of its complete intersection $V$-subalgebras as in Theorem \ref{T}.
\end{Remark}

\vskip 0.5 cm

\end{document}